\theoremstyle{plain}
\newtheorem{theorem}{Theorem}
\newtheorem*{theorem*}{Theorem}
\newtheorem*{corollary*}{Corollary}
\newtheorem{lemma}{Lemma}
\newtheorem*{lemma*}{Lemma}
\newtheorem*{proposition*}{Proposition}
\newtheorem*{conjecture*}{Conjecture}
\theoremstyle{definition}
\newtheorem*{definition*}{Definition}
\theoremstyle{remark}
\newtheorem{remark}{Remark}
\newtheorem*{remark*}{Remark}
\theoremstyle{example}
\newtheorem{example}{Example}
\newtheorem*{example*}{Example}
\begin{document}
\title[Cantor series and functions]{Certain functions defined in terms of Cantor series}
\author{Symon Serbenyuk}
\address{ 
  45~Shchukina St. \\
  Vinnytsia \\
 21012 \\
  Ukraine}
\email{simon6@ukr.net}

\subjclass[2010]{26A27, 11B34, 11K55, 39B22.}

\keywords{
 nowhere differentiable function, singular function, Cantor series, s-adic representation, non-monotonic function, Hausdorff dimension.}

\begin{abstract}

The present article is devoted to certain examples of functions whose argument represented in terms of Cantor series. 

\end{abstract}
\maketitle



\section{Introduction}

Let $Q\equiv (q_k)$ be a fixed sequence of positive integers, $q_k>1$,  $\Theta_k$ be a sequence of the sets $\Theta_k\equiv\{0,1,\dots ,q_k-1\}$, and $\varepsilon_k\in\Theta_k$.

The Cantor series expansion 
\begin{equation}
\label{eq: Cantor series}
\frac{\varepsilon_1}{q_1}+\frac{\varepsilon_2}{q_1q_2}+\dots +\frac{\varepsilon_k}{q_1q_2\dots q_k}+\dots
\end{equation}
of $x\in [0,1]$,   first studied by G. Cantor in \cite{C1869}. It is easy to see that the Cantor series expansion is the  $q$-ary expansion
$$
\frac{\alpha_1}{q}+\frac{\alpha_2}{q^2}+\dots+\frac{\alpha_n}{q^n}+\dots
$$
of numbers  from the closed interval $[0,1]$ whenever the condition $q_k=q$ holds for all positive integers $k$. Here $q$ is a fixed positive integer, $q>1$, and $\alpha_n\in\{0,1,\dots , q-1\}$.

By $x=\Delta^Q _{\varepsilon_1\varepsilon_2\ldots\varepsilon_k\ldots}$  denote a number $x\in [0,1]$ represented by series \eqref{eq: Cantor series}. This notation is called \emph{the representation of $x$ by Cantor series \eqref{eq: Cantor series}.}

We note that certain numbers from $[0,1]$ have two different representations by Cantor series \eqref{eq: Cantor series}, i.e., 
$$
\Delta^Q _{\varepsilon_1\varepsilon_2\ldots\varepsilon_{m-1}\varepsilon_m000\ldots}=\Delta^Q _{\varepsilon_1\varepsilon_2\ldots\varepsilon_{m-1}[\varepsilon_m-1][q_{m+1}-1][q_{m+2}-1]\ldots}=\sum^{m} _{i=1}{\frac{\varepsilon_i}{q_1q_2\dots q_i}}.
$$
Such numbers are called \emph{$Q$-rational}. The other numbers in $[0,1]$ are called \emph{$Q$-irrational}.

Let $c_1,c_2,\dots, c_m$ be an
ordered tuple of integers such that $c_i\in\{0,1,\dots, q_i-~1\}$ for $i=\overline{1,m}$. 

\emph{A cylinder $\Delta^Q _{c_1c_2...c_m}$ of rank $m$ with base $c_1c_2\ldots c_m$} is a set of the form
$$
\Delta^Q _{c_1c_2...c_m}\equiv\{x: x=\Delta^Q _{c_1c_2...c_m\varepsilon_{m+1}\varepsilon_{m+2}\ldots\varepsilon_{m+k}\ldots}\}.
$$
That is any cylinder $\Delta^Q _{c_1c_2...c_m}$ is a closed interval of the form
$$
\left[\Delta^Q _{c_1c_2...c_m000}, \Delta^Q _{c_1c_2...c_m[q_{m+1}][q_{m+2}][q_{m+3}]...}\right].
$$

Define \emph{the shift operator $\sigma$ of expansion \eqref{eq: Cantor series}} by the rule
$$
\sigma(x)=\sigma\left(\Delta^Q _{\varepsilon_1\varepsilon_2\ldots\varepsilon_k\ldots}\right)=\sum^{\infty} _{k=2}{\frac{\varepsilon_k}{q_2q_3\dots q_k}}=q_1\Delta^{Q} _{0\varepsilon_2\ldots\varepsilon_k\ldots}.
$$

It is easy to see that 
\begin{equation*}
\label{eq: Cantor series 2}
\begin{split}
\sigma^n(x) &=\sigma^n\left(\Delta^Q _{\varepsilon_1\varepsilon_2\ldots\varepsilon_k\ldots}\right)\\
& =\sum^{\infty} _{k=n+1}{\frac{\varepsilon_k}{q_{n+1}q_{n+2}\dots q_k}}=q_1\dots q_n\Delta^{Q} _{\underbrace{0\ldots 0}_{n}\varepsilon_{n+1}\varepsilon_{n+2}\ldots}.
\end{split}
\end{equation*}

Therefore, 
\begin{equation}
\label{eq: Cantor series 3}
x=\sum^{n} _{i=1}{\frac{\varepsilon_i}{q_1q_2\dots q_i}}+\frac{1}{q_1q_2\dots q_n}\sigma^n(x).
\end{equation}

Note that, in the paper \cite{S. Serbenyuk alternating Cantor series 2013}, the notion of the shift operator of an alternating Cantor series is studied in detail.

In \cite{Salem1943}, Salem modeled the function 
$$
s(x)=s\left(\Delta^2 _{\alpha_1\alpha_2...\alpha_n...}\right)=\beta_{\alpha_1}+ \sum^{\infty} _{n=2} {\left(\beta_{\alpha_n}\prod^{n-1} _{i=1}{q_i}\right)}=y=\Delta^{Q_2} _{\alpha_1\alpha_2...\alpha_n...},
$$
where $q_0>0$, $q_1>0$, and $q_0+q_1=1$. This function is a singular function. However,  
generalizations of the Salem function can be non-differentiable functions or do not have the derivative on a certain set.

Let us consider the following generalizations of the Salem function that are described in the paper \cite{S. Serbenyuk systemy rivnyan 2-2} as well. 

\begin{example}[\cite{Symon2015}]
\label{example: 1}
{\rm Let $(q_n)$ is a fixed sequence of positive integers, $q_n>1$, and $(A_n)$ is a sequence of the sets  $\Theta_n = \{0,1,\dots,q_n-1\}$.

Let $x\in [0,1]$ be an arbitrary number represented by a positive Cantor series
\begin{equation*}
x=\Delta^Q _{\varepsilon_1\varepsilon_2...\varepsilon_n...}=\sum^{\infty} _{n=1} {\frac{\varepsilon_n}{q_1q_2\dots q_n}}, ~\mbox{where}~\varepsilon_n \in \Theta_n.
\end{equation*}

Let $P=||p_{i,n}||$  be a fixed matrix such that  $p_{i,n}\in (-1,1)$ ($ n=1,2,\dots ,$ and $i=~\overline{0,q_n-1}$), $\sum^{q_n-1} _{i=0} {p_{i,n}}=1$ for an arbitrary $n \in \mathbb N$, and $\prod^{\infty} _{n=1}{p_{i_n,n}}=0$ for any sequence   $(i_n)$.

Suppose that elements of the matrix $P=||p_{i,n,}||$ can be negative numbers as well but   
$$
\beta_{0,n}=0, \beta_{i,n}>0 ~\mbox{for}~ i\ne 0, ~\mbox{and} ~ \max_i {|p_{i,n}|} <1.
$$
Here 
$$
\beta_{\varepsilon_{k},k}=\begin{cases}
0&\text{if $\varepsilon_{k}=0$}\\
\sum^{\varepsilon_{k}-1} _{i=0} {p_{i,k}}&\text{if $\varepsilon_{k}\ne 0$.}
\end{cases}
$$
Then the following statement is true.
\begin{theorem}[\cite{Symon2015}]
Given the matrix $P$  such that for all $n \in \mathbb N$ the following are true:  $p_{\varepsilon_n,n}\cdot p_{\varepsilon_n-1,n}<0$ moreover $q_n \cdot~p_{d_n-1,n}\ge 1$ or  $q_n \cdot p_{q_n-1,n}\le 1$; and the  conditions 
$$
\lim_{n \to \infty} {\prod^{n} _{k=1} {q_k p_{0,k}}}\ne  0, \lim_{n \to \infty} {\prod^{n} _{k=1} {q_k p_{q_k-1,k}}}\ne 0
$$
hold simultaneously.
Then the function  
$$
F(x)=\beta_{\varepsilon_1(x),1}+\sum^{\infty} _{k=2} {\left(\beta_{\varepsilon_k(x),k}\prod^{k-1} _{n=1} {p_{\varepsilon_n(x),n}}\right)}
$$
is non-differentiable on  $[0,1]$.
\end{theorem}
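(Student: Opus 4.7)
The strategy is to show that for every $x \in [0,1]$ the derivative $F'(x)$ fails to exist, by exhibiting two secant-slope sequences of $F$ over intervals shrinking to $x$ that are forced to have opposite signs. The existence of $F'(x)$ would then force both sequences to converge to zero, which the non-vanishing product hypotheses will rule out.

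First I would derive the self-similar representation of $F$ on cylinders. Using the definition of $F$ together with the identity \eqref{eq: Cantor series 3}, one checks directly that for $x \in \Delta^Q_{\varepsilon_1 \ldots \varepsilon_n}$,
\[
F(x) = F(x'_n) + \Bigl(\prod_{k=1}^n p_{\varepsilon_k(x),k}\Bigr) G_n(\sigma^n(x)),
\]
where $G_n$ is the analogous Salem-type function built from the column-shifted $P$-matrix and $x'_n$ is the left endpoint of the cylinder. A telescoping argument using $\sum_i p_{i,k}=1$ and the hypothesis $\prod_n p_{i_n,n}=0$ gives $G_n(0)=0$ and $G_n(1)=1$, so the secant slope of $F$ across the rank-$n$ cylinder of $x$ is
\[
R_n(x) := \frac{F(x''_n)-F(x'_n)}{x''_n-x'_n} = \prod_{k=1}^n q_k\, p_{\varepsilon_k(x),k},
\]
and differentiability of $F$ at $x$ would force $R_n(x) \to F'(x)$.

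Next I would exploit the sign-alternation hypothesis. For each $n$ I pick an admissible neighbour digit $\varepsilon_n \mp 1$ and consider the adjacent cylinder $\Delta^Q_{\varepsilon_1 \ldots \varepsilon_{n-1}[\varepsilon_n \mp 1]}$; its endpoints also tend to $x$, and the decomposition $F(b)-F(a)=F'(x)(b-a)+o(b-a)$ applied to the subintervals $[a,x]$ and $[x,b]$ shows that its secant slope
\[
\widetilde R_n(x) = \Bigl(\prod_{k=1}^{n-1} q_k p_{\varepsilon_k,k}\Bigr) q_n p_{\varepsilon_n \mp 1, n}
\]
also tends to $F'(x)$. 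But the hypothesis $p_{\varepsilon_n,n}\,p_{\varepsilon_n-1,n}<0$ makes $R_n(x)$ and $\widetilde R_n(x)$ of opposite signs, so a common limit must be zero; hence $F'(x)=0$ and in particular $R_n(x) \to 0$.

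To contradict $R_n(x) \to 0$, I would combine the magnitude condition $|q_n p_{q_n-1,n}| \ge 1$ with the non-vanishing products $\lim_n \prod_{k=1}^n q_k p_{0,k} \ne 0$ and $\lim_n \prod_{k=1}^n q_k p_{q_k-1,k} \ne 0$. Together with $\sum_i p_{i,n}=1$ and the sign alternation these yield a uniform lower bound on $q_n(|p_{\varepsilon_n,n}|+|p_{\varepsilon_n-1,n}|)$, so at every level at least one of $q_n|p_{\varepsilon_n,n}|$ or $q_n|p_{\varepsilon_n-1,n}|$ is bounded below; propagating this through the product, with the two extremal non-vanishing products as anchors, produces a subsequence of $|R_n(x)|$ or $|\widetilde R_n(x)|$ that stays away from zero. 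This last step is the main obstacle of the proof: transferring the non-vanishing from the two extremal digit sequences $(0,0,\ldots)$ and $(q_1-1,q_2-1,\ldots)$ to an arbitrary digit sequence $(\varepsilon_k(x))$ while controlling the sign cancellations. Subsidiary edge cases---when $\varepsilon_n\in\{0,q_n-1\}$ only one neighbour is admissible, and when $x$ is $Q$-rational and has two Cantor expansions---are handled by always choosing the admissible neighbour throughout and by working with one-sided difference quotients, respectively.
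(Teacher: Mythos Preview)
The paper does not contain a proof of this theorem. The statement appears inside Example~1 of the Introduction and is explicitly attributed to \cite{Symon2015}; no argument is given here, only the statement. So there is nothing in the present paper to compare your proposal against.

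That said, your outline has a real gap that you yourself flag. Steps 1--3 are standard and correct: the secant slope over the rank-$n$ cylinder containing $x$ is indeed $R_n(x)=\prod_{k=1}^n q_k p_{\varepsilon_k(x),k}$, and the sign hypothesis $p_{\varepsilon_n,n}p_{\varepsilon_n-1,n}<0$ forces the adjacent-cylinder slope $\widetilde R_n(x)$ to have the opposite sign, so any existing derivative must be zero. The problem is your final step. The two non-vanishing hypotheses concern only the very particular digit strings $(0,0,\ldots)$ and $(q_1-1,q_2-1,\ldots)$; from these alone one cannot deduce that $\prod_{k=1}^n q_k|p_{\varepsilon_k(x),k}|$ stays away from zero for an \emph{arbitrary} digit sequence $(\varepsilon_k(x))$. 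Your sentence ``propagating this through the product, with the two extremal non-vanishing products as anchors, produces a subsequence \ldots\ that stays away from zero'' is not an argument: a product $\prod a_k$ with each factor bounded below by some $c_k<1$ can still tend to zero, and nothing you have written links the factors $q_k|p_{\varepsilon_k,k}|$ for intermediate digits to the two extremal factors in a way that survives infinite multiplication. This is precisely where the additional magnitude hypothesis (stated in the theorem, with an apparent typo, as ``$q_n p_{d_n-1,n}\ge 1$ or $q_n p_{q_n-1,n}\le 1$'') must enter, and you have not shown how it closes the gap. Until that step is made precise, the proof is incomplete.
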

} 
\end{example}
\begin{example}[\cite{Symon2017}]
{\rm
Let
$P=||p_{i,n}||$ be a given matrix such that  $n=1,2, \dots$ and $i=\overline{0,q_n-1}$. For this matrix the following system of properties  holds: 
$$ 
\left\{
\begin{aligned}
\label{eq: tilde Q 1}
1^{\circ}.~~~~~~~~~~~~~~~~~~~~~~~~~~~~~~~~~~~~~~~~~~~~~~~\forall n \in \mathbb N:  p_{i,n}\in (-1,1)\\
2^{\circ}.  ~~~~~~~~~~~~~~~~~~~~~~~~~~~~~~~~~~~~~~~~~~~~~~~~\forall n \in \mathbb N: \sum^{q_n-1}_{i=0} {p_{i,n}}=1\\
3^{\circ}. ~~~~~~~~~~~~~~~~~~~~~~~~~~~~~~~~~~~~~ \forall (i_n), i_n \in  \Theta_{n}: \prod^{\infty} _{n=1} {|p_{i_n,n}|}=0\\
4^{\circ}.~~~~~~~~~~~~~~\forall  i_n \in \Theta_{n}\setminus\{0\}: 1>\beta_{i_n,n}=\sum^{i_n-1} _{i=0} {p_{i,n}}>\beta_{0,n}=0.\\
\end{aligned}
\right.
$$ 

 Let us consider the following function 
$$ 
\tilde{F}(x)=\beta_{\varepsilon_1(x),1}+\sum^{\infty} _{n=2} {\left(\tilde{\beta}_{\varepsilon_n(x),n}\prod^{n-1} _{j=1} {\tilde{p}_{\varepsilon_j(x),j}}\right)},
$$
where
$$
\tilde{\beta}_{\varepsilon_n(x),n}=\begin{cases}
\beta_{\varepsilon_n(x),n}&\text{if $n$ is   odd }\\
\beta_{q_n-1-\varepsilon_n(x),n}&\text{if $n$ is  even,}
\end{cases}
$$
$$
\tilde{p}_{\varepsilon_n(x),n}=\begin{cases}
p_{\varepsilon_n(x),n}&\text{if $n$  is odd }\\
p_{q_n-1-\varepsilon_n(x),n}&\text{if $n$  is   even,}
\end{cases}
$$
$$
\beta_{\varepsilon_{n}(x),n}=\begin{cases}
0&\text{if $\varepsilon_{n}=0$}\\
\sum^{\varepsilon_{n}-1} _{i=0} {p_{i,n}}&\text{if $\varepsilon_{n}\ne 0$.}
\end{cases}
$$
Here $x$ represented by an alternating Cantor series, i.e., 
$$
x=\Delta^{-(q_n)} _{\varepsilon_1\varepsilon_2...\varepsilon_n...}=\sum^{\infty} _{n=1} {\frac{1+\varepsilon_n}{q_1q_2\dots q_n}(-1)^{n+1}},
$$
where $(q_n)$ is a fixed sequence of positive integers, $q_n>1$, and $(\Theta_{n})$ is a sequence of the sets  $\Theta_{n} = \{0,1,\dots,q_n-1\}$, and $\varepsilon_n\in \Theta_{n}$.
\begin{theorem}
Let  $p_{\varepsilon_n,n}\cdot p_{\varepsilon_n-1,n}<0$  for all $n \in \mathbb N$, $\varepsilon_n \in \Theta_{n} \setminus \{0\}$ and conditions 
$$
\lim_{n \to \infty} {\prod^{n} _{k=1} {q_k p_{0,k}}}\ne  0, \lim_{n \to \infty} {\prod^{n} _{k=1} {q_k p_{q_k-1,k}}}\ne 0
$$
hold simultaneously.  Then the function $\tilde{F}$ is  non-differentiable on $[0,1]$. 
\end{theorem}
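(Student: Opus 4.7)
The plan is to prove non-differentiability pointwise: for each fixed $x_0\in[0,1]$ I would construct two sequences of test points converging to $x_0$ whose difference quotients for $\tilde F$ have limits of opposite signs (or fail to converge), ruling out the existence of $\tilde F'(x_0)$. The strategy closely mirrors the positive-Cantor argument of the Example~1 theorem, with additional bookkeeping forced by the alternating expansion $\Delta^{-(q_n)}$ and by the parity-dependent definitions of $\tilde p$ and $\tilde\beta$.

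Write $x_0=\Delta^{-(q_n)}_{\varepsilon_1\varepsilon_2\ldots}$ and, for each $n$, take $x'_n$ and $x''_n$ to be the two endpoints of the rank-$n$ cylinder in the alternating representation that contains $x_0$. Because these endpoints share the first $n$ coordinates, the series defining $\tilde F(x'_n)-\tilde F(x''_n)$ telescopes and factors as $\prod_{j=1}^{n}\tilde p_{\varepsilon_j,j}$ times a fixed tail-difference, whose size is controlled from above and below by the products $\prod_{k=1}^{n}q_kp_{0,k}$ and $\prod_{k=1}^{n}q_kp_{q_k-1,k}$; by the two limit hypotheses, both of these stay bounded away from zero. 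Since the diameter of the cylinder equals $1/(q_1q_2\cdots q_n)$, the difference quotient is equal, up to a factor bounded away from $0$ and $\infty$, to $\prod_{j=1}^{n}q_j\tilde p_{\varepsilon_j,j}$. Next I would form an alternative pair $y'_n,y''_n\to x_0$ by perturbing the $n$-th coordinate to the adjacent value, so that the same calculation replaces the $n$-th factor $\tilde p_{\varepsilon_n,n}$ by $\tilde p_{\varepsilon_n-1,n}$. The assumption $p_{\varepsilon_n,n}\cdot p_{\varepsilon_n-1,n}<0$ transfers through the parity convention to $\tilde p_{\varepsilon_n,n}\cdot\tilde p_{\varepsilon_n-1,n}<0$, so the two difference quotients differ in sign, and $\tilde F'(x_0)$ cannot exist.

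The main obstacle, as I see it, is the combination of the parity swap in $\tilde p$ and $\tilde\beta$ with the alternating orientation of cylinders: the ordering of the endpoints of $\Delta^{-(q_n)}_{c_1\ldots c_n}$ flips with $n$, and the rule governing which digit-shift raises or lowers $x$ flips with it. Reconciling these conventions so that the telescoped numerator and the denominator $x'_n-x''_n$ carry consistent signs -- and then verifying that the tail-difference factor is genuinely bounded away from zero under the two product hypotheses -- is where the technical care concentrates. Once those bookkeeping points are settled, the same $p_{\varepsilon_n,n}\cdot p_{\varepsilon_n-1,n}<0$ oscillation mechanism forces non-differentiability at every $x_0\in[0,1]$.
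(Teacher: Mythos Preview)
The paper does not actually prove this theorem. Theorem~2 appears inside Example~2 in the Introduction and is merely \emph{cited} from \cite{Symon2017}; the body of the present paper contains no argument for it (the proved results here are Lemmas~1--4 and Theorems~3--6, about the functions $f$ and $g$, not about $\tilde F$). So there is no ``paper's own proof'' to compare your proposal against.

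As a standalone sketch, your plan is the standard and plausible one for this family of results: fix $x_0$, use the two endpoints of the rank-$n$ cylinder as one test pair, use a nearest-digit perturbation at level $n$ as a second test pair, and exploit the factorisation of $\tilde F$-increments over cylinders into $\prod_{j\le n}\tilde p_{\varepsilon_j,j}$ times a tail term. Two points deserve more care than your outline gives them. First, the ``tail-difference'' is not fixed: for the endpoint pair it equals $\tilde F$ evaluated on the two extreme tails, and one must actually show that the two product hypotheses bound \emph{this} quantity away from zero uniformly in $n$ (for the positive-Cantor case this is where the conditions on $\prod q_kp_{0,k}$ and $\prod q_kp_{q_k-1,k}$ enter, but in the alternating case the parity swap means you must check which extreme tail corresponds to which product). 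Second, the claim that the sign hypothesis $p_{\varepsilon_n,n}p_{\varepsilon_n-1,n}<0$ transfers to $\tilde p_{\varepsilon_n,n}\tilde p_{\varepsilon_n-1,n}<0$ requires the observation that for even $n$ the map $\varepsilon\mapsto q_n-1-\varepsilon$ sends adjacent indices to adjacent indices, so the product of the corresponding $p$'s is again an adjacent-index product and hence negative by hypothesis; you assert this but should verify it explicitly. With those two pieces filled in, the oscillation argument goes through.
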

 }
\end{example}

In the present article, two examples  of certain functions with complicated local structure, are constructed and investigated.

Suppose  that the condition $q_n\le s$ holds for all positive integers $n$.
The first function is following:
$$
f: ~~~ x=\Delta^Q _{\varepsilon_1\varepsilon_2... \varepsilon_n...} ~\longrightarrow~\Delta^q _{\varepsilon_1\varepsilon_2... \varepsilon_n...}=y.
$$
This functon is interesting, since the function described in Example~\ref{example: 1} can be represented by the following way:
$$
F(x)=F_{\xi, Q} \circ f.
$$ 
 Here by ``$\circ$" denote the operation of composition of functions. Also, the function $F_{\xi, Q}$ is a function of the type:
$$
F_{\eta, Q}(y)=\beta_{\varepsilon_1(y),1}+\sum^{\infty} _{k=2} {\left({\beta}_{\varepsilon_k(y),k} \prod^{k-1} _{j=1} {{p}_{\varepsilon_j(y),j}}\right)},
$$
where $y=\Delta^q _{\varepsilon_1\varepsilon_2... \varepsilon_n...}$.

Note that the function $F_{\eta, q}$ is a distribution function of a certain random variable $\eta$ whenever elements $p_{i,n}$ of the matrix $P$ (this matrix described in the last-mentioned examples) are non-negative.
\begin{remark}
Let $\eta$ be a random variable defined by the $q$-ary expansion, i.e.,  
$$
\eta= \frac{\xi_1}{q}+\frac{\xi_2}{q^2}+\frac{\xi_3}{q^3}+\dots+\frac{\xi_{k}}{q^{k}}+\dots \equiv \Delta^{q} _{\xi_1\xi_2...\xi_{k}...},
$$
where the digits   $\xi_k$ $(k=1,2,3, \dots)$ are random and take the values $0,1,\dots , q-1$ with probabilities ${p}_{0,k}, {p}_{1,k}, \dots , {p}_{q-1,k}$. That is, $\xi_k$ are independent and $P\{\xi_k=i_k\}={p}_{i_k,k}$, $i_k \in \Theta=\{0,1, \dots , q-1\}$. 

From the definition of the distribution function and the following expressions for $x=\Delta^q _{\alpha_1\alpha_2...\alpha_k...}$
$$
\{\eta<x\}=\{\xi_1<\alpha_1(x)\}\cup\{\xi_1=\alpha_1(x),\xi_2<\alpha_2(x)\}\cup \ldots 
$$
$$
\ldots \cup\{\xi_1=\alpha_1(x),\xi_2=\alpha_2(x),\dots ,\xi_k<\alpha_k(x)\}\cup \ldots,
$$

$$
P\{\xi_1=\alpha_1(x),\xi_2=\alpha_2(x),\dots ,\xi_k<\alpha_k(x)\}=\beta_{\alpha_k(x),k}\prod^{k-1} _{j=1} {{p}_{\alpha_{j}(x),j}}
$$
we get that the distribution function $F_{\eta, q}$ of the random variable $\eta$ has the
form
$$
F_{\eta, q}(x)=\begin{cases}
0&\text{for $x< 0$}\\
\beta_{\alpha_1(x),1}+\sum^{\infty} _{k=2} {\left[{\beta}_{\alpha_k(x),k} \prod^{k-1} _{j=1} {{p}_{\alpha_j(x),j}}\right]}&\text{for $0 \le x<1$}\\
1&\text{for $x\ge 1$,}
\end{cases}
$$
since the conditions $F_{\eta, q}(0)=0$, $F_{\eta, q}(1)=1$ hold and $F_{\eta, q}$ is a continuous, monotonic, and  non-decreasing function (the most generalized cases of the Salem function were investigated in \cite{S. Serbenyuk function nega-tilde Q-representation}).
\end{remark}

\begin{remark}
In the general case, suppose that $(f_n)$ is a finite or infinite sequence of certain functions (the sequence can contain functions with complicated local structure). Let us consider the corresponding composition of the functions
$$
\ldots \circ f_n \circ \ldots \circ f_2 \circ f_1=f_{c,\infty}
$$
or
$$
 f_n \circ \ldots \circ f_2 \circ f_1=f_{c,n}.
$$
Also, we can take a certain part of the composition, i.e.,
$$
 f_{n_0+t} \circ \ldots \circ f_{n_0+1} \circ f_{n_0}=f_{c,\overline{n_0,{n_0+t}}},
$$
where $n_0$ is a fixed positive integer (a number from the set $\mathbb N$), $t\in \mathbb Z_0=\mathbb N \cup\{0\}$, and $n_0+t \le n$.

One can use such technique for modeling and studying  functions with complicated local structure. Also, one can use  new representations of real numbers (numeral systems) of the type
$$
x^{'}=\Delta^{f_{c,\infty}} _{i_1i_2...i_n}=\ldots \circ f_n \circ \ldots \circ f_2 \circ f_1(x), 
$$
$$
x^{'}=\Delta^{f_{c,n}} _{i_1i_2...i_n}=f_n \circ \ldots \circ f_2 \circ f_1(x)
$$
or
$$
z^{'}=\Delta^{f_{c,\overline{n_0,{n_0+t}}}} _{i_1i_2...i_n}=f_{n_0+t} \circ \ldots \circ f_{n_0+1} \circ f_{n_0}(z).
$$
in fractal theory, applied   mathematics, etc. The next articles of the author of the present article will be devoted to such investigations.
\end{remark}

The second map considered in this article is useful for modeling fractals in space~$\mathbb R^2$. That is, the map
$$
f: x=\Delta^q _{\underbrace{u\ldots u}_{\alpha_1-1}\alpha_1\underbrace{u\ldots u}_{\alpha_2-1}\alpha_2\ldots \underbrace{u\ldots u}_{\alpha_n-1}\alpha_n\ldots} \longrightarrow \Delta^q _{\alpha_1\alpha_2...\alpha_n...},
$$
where $u\in \{0,1, \dots , q-1\}$ is a fixed number,  $\alpha_n \in\{1,2, \dots , q-1\}\setminus\{u\}$, and $3<q$ is a fixed positive integers, models a certain fractal in $\mathbb R^2$. It is easy to see that one can consider such map defined in terms of other representations of real numbers (e.g., the $Q_s$, $Q^{*}$, $Q^{*} _s$, $\tilde Q$, nega-$\tilde Q$-representations and other positive and alternating representations). Really, functions with complicated local structure  defined  in terms of different representations of real numbers, as well as their compositions are useful for modeling fractals (the Moran sets) in $\mathbb R^2$.  Regularities in properties of different sets under the map spawned by functions with complicated local structure and their compositions, are interesting and unknown. The next articles of the author of the present paper will be devoted to such investigations as well.

\section{One function defined in terms of positive Cantor series}

Let us consider the function
$$
f(x)=f\left(\Delta^Q _{\varepsilon_1\varepsilon_2... \varepsilon_n...}\right)=f\left(\sum^{\infty} _{n=1}{\frac{\varepsilon_n}{q_1q_2\cdots q_n}}\right)=\sum^{\infty} _{n=1}{\frac{\varepsilon_n}{q^n}}=\Delta^q _{\varepsilon_1\varepsilon_2... \varepsilon_n...}=y,
$$
where $\varepsilon_n\in\Theta_n$ and the condition $q_n\le q$ holds for all positive integers $n$.

\begin{lemma}[On the well-posedness of the definition of the function]
\label{lemma: 1}
Values of the function $f$ for different representations of Q-rational numbers from $[0,1]$ are:
\begin{itemize}
\item identical whenever for all positive integers $n$ the condition $q_n=q$ holds;
\item different whenever  for all positive integers $n$ the condition $q_n<q$ holds;
\item different for numbers from  no more than a countable subset of Q-rational numbers  whenever there exists a finite or infinite subsequence $n_k$ of positive integers such that $q_{n_k}<q$ for all positive integers values of $k$.
\end{itemize}
\end{lemma}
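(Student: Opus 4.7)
The plan is to pick an arbitrary $Q$-rational $x$ together with its two Cantor representations
$$x = \Delta^Q_{\varepsilon_1\ldots\varepsilon_{m-1}\varepsilon_m 000\ldots} = \Delta^Q_{\varepsilon_1\ldots\varepsilon_{m-1}[\varepsilon_m-1][q_{m+1}-1][q_{m+2}-1]\ldots}$$
(with $\varepsilon_m\neq 0$), apply $f$ to each, and determine when the two outputs agree. Because $f$ keeps the digit string intact but replaces the denominator $q_1q_2\cdots q_n$ by $q^n$, the common prefix $\varepsilon_1\ldots\varepsilon_{m-1}$ contributes the same in both images and cancels. What remains is
$$D := f\bigl(\Delta^Q_{\varepsilon_1\ldots\varepsilon_m 000\ldots}\bigr) - f\bigl(\Delta^Q_{\varepsilon_1\ldots[\varepsilon_m-1][q_{m+1}-1]\ldots}\bigr) = \frac{1}{q^m} - \sum_{k=m+1}^{\infty}\frac{q_k-1}{q^k},$$
and the problem reduces to deciding when $D=0$.

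For the first bullet ($q_n=q$ for all $n$) the tail sum is the standard geometric series $\sum_{k\ge m+1}(q-1)/q^k = 1/q^m$, so $D=0$ and the two representations of every $Q$-rational are identified. For the second bullet ($q_n<q$ for all $n$) one uses $q_k-1\le q-2$ together with $\sum_{k\ge m+1}q^{-k}=1/(q^m(q-1))$ to bound the tail by $(q-2)/(q^m(q-1))<1/q^m$; hence $D>0$ strictly for every $Q$-rational, and the two images always differ.

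For the third bullet the same identity shows $D=0$ if and only if $q_k=q$ for every $k\ge m+1$. If the prescribed subsequence $(n_k)$ with $q_{n_k}<q$ is infinite, no $m$ meets this condition, so $D\neq 0$ for every $Q$-rational; if $(n_k)$ is finite with largest index $N$, then $D=0$ precisely when $m\ge N$, and the $Q$-rationals whose two $f$-images disagree are exactly those whose truncated representation terminates strictly before position $N$, which form a finite set. In either subcase the ``bad'' $Q$-rationals sit inside the countable set of all $Q$-rationals, so the assertion \emph{at most countable} follows. No step of the argument is a genuine obstacle; the only place requiring care is the strict inequality in the middle case, which rests on the elementary bound $q_k-1\le q-2$ and the explicit summation of the geometric tail.
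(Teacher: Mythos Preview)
Your argument is correct and follows the same approach as the paper: write the difference of the two $q$-ary images of a $Q$-rational as $D = q^{-m} - \sum_{k>m}(q_k-1)q^{-k}$ and analyze when $D=0$. Your case-by-case treatment is in fact more thorough than the paper's brief proof, which records only the inequality $D\ge 0$ together with the equality condition and does not unpack the third bullet separately.
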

\begin{proof}
Let $x$ be a Q-rational number. Then there exists a number $n_0$ such that 
$$
x=x_1=\Delta^Q _{\varepsilon_1\varepsilon_2... \varepsilon_{n_0-1}\varepsilon_{n_0}000...}=\Delta^Q _{\varepsilon_1\varepsilon_2... \varepsilon_{n_0-1}[\varepsilon_{n_0}-1][q_{n_0+1}-1][q_{n_0+2}-1][q_{n_0+3}-1]...}=x_2.
$$
Whence,
$$
f(x_1)=\Delta^q _{\varepsilon_1\varepsilon_2... \varepsilon_{n_0-1}\varepsilon_{n_0}000...}, ~~~f(x_2)=\Delta^q _{\varepsilon_1\varepsilon_2... \varepsilon_{n_0-1}[\varepsilon_{n_0}-1][q_{n_0+1}-1][q_{n_0+2}-1][q_{n_0+3}-1]...}
$$
and
$$
f(x_2)-f(x_1)=-\frac{1}{q^{n_0}}+\sum^{\infty} _{n=n_0+1}{\frac{q_n-1}{q^n}}\le 0.
$$
That is, certain Q-rational  points are points of discontinuity of the function. It is easy to see that $f(x_2)-f(x_1)=0$ whenever the condition $q_n=q$ holds for all positive integers $n$.

From unique representation for each Q-irrational number from $[0,1]$ it follows that the  function $f$ is well defined at any Q-irrational  point.
\end{proof}

\begin{remark}
To reach that the function $f$ be well-defined on the set of Q-rational numbers from $[0,1]$, we shall not consider the  representation 
$$
\Delta^Q _{\varepsilon_1\varepsilon_2... \varepsilon_{n-1}[\varepsilon_{n}-1][q_{n+1}-1][q_{n+2}-1][q_{n+3}-1]...}.
$$
\end{remark}

\begin{lemma}
The function $f$ has the following properties:
\begin{enumerate}
\item $D(f)=[0,1]$, where $D(f)$ is the domain of definition of $f$;
\item Let $E(f)$ be the range of values of $f$. Then:
\begin{itemize}
\item $E(f)=[0,1]$ whenever the condition $q_n=q$ holds for all positive integers $n$,
\item $E(f)=[0,1]\setminus C_f$, where $C_f=C_1\cup C_2$,
$$
C_1=\left\{y: y=\Delta^q _{\varepsilon_1\varepsilon_2... \varepsilon_{n}}, \varepsilon_n\notin\{q_n,q_n+1,\dots , q-1\}\text{for all $n$ such that $q_n<q$}\right\}
$$
and
$$
C_2=\left\{y: y=\Delta^q _{\varepsilon_1\varepsilon_2... \varepsilon_{n-1}[\varepsilon_{n}-1][q_{n+1}-1][q_{n+2}-1][q_{n+3}-1]...}\right\};
$$
\end{itemize}
\item $f(x)+f(1-x)=f(1)\le 1$;
\item $f\left(\sigma^k(x)\right)=\sigma^k\left(f(x)\right)$ for any $k\in \mathbb N$.
\end{enumerate}
\end{lemma}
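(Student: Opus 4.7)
The plan is to verify the four properties in turn, since each follows from a direct manipulation of the Cantor-series representation. Property (1) is essentially contained in Lemma~\ref{lemma: 1}: every $x\in[0,1]$ admits a representation $\Delta^Q_{\varepsilon_1\varepsilon_2\ldots}$ to which the defining formula of $f$ applies, and the well-posedness question has been settled by agreeing (per the preceding Remark) to discard the ``tail-of-$(q_n-1)$'' representations of $Q$-rational points.

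For property (2) the key observation is that the digits $\varepsilon_n$ in the image expansion $\Delta^q_{\varepsilon_1\varepsilon_2\ldots}$ are restricted to $\{0,1,\ldots,q_n-1\}$ rather than to the full set $\{0,1,\ldots,q-1\}$. When $q_n=q$ for every $n$ this is no restriction, so $E(f)=[0,1]$. Otherwise a $q$-ary number $y$ lies in $E(f)$ if and only if it admits a $q$-ary expansion with $\varepsilon_n\in\Theta_n$ for every $n$ which is, in addition, not ``forbidden'' under our convention. The set $C_1$ captures the first obstruction (every $q$-ary expansion of $y$ has at least one digit $\varepsilon_n\in\{q_n,\ldots,q-1\}$ for some $n$ with $q_n<q$) and $C_2$ captures the second. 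I would make this rigorous by a case split on $q$-rational versus $q$-irrational image values, since $q$-rational points are precisely those possessing two distinct $q$-ary expansions.

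Property (3) is the classical ``digit-complement'' identity. Using the telescoping
$$\sum_{n=1}^{\infty}\frac{q_n-1}{q_1q_2\cdots q_n}=\sum_{n=1}^{\infty}\left(\frac{1}{q_1\cdots q_{n-1}}-\frac{1}{q_1\cdots q_n}\right)=1,$$
one sees that if $x=\Delta^Q_{\varepsilon_1\varepsilon_2\ldots}$, then $1-x=\Delta^Q_{[q_1-1-\varepsilon_1][q_2-1-\varepsilon_2]\ldots}$. Applying $f$ to both representations and adding gives $f(x)+f(1-x)=\sum_n(q_n-1)/q^n=f(1)$, and the bound $f(1)\le 1$ follows from $q_n\le q$, with equality precisely when $q_n=q$ for all $n$. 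Property (4) is then immediate from the formulas: $\sigma^k(x)$ has formal digit string $\varepsilon_{k+1}\varepsilon_{k+2}\ldots$, so $f(\sigma^k(x))=\sum_{n=k+1}^{\infty}\varepsilon_n/q^{n-k}$, while the $q$-ary shift applied to $f(x)=\Delta^q_{\varepsilon_1\varepsilon_2\ldots}$ produces the same sum.

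The main obstacle I anticipate is property (2): carefully showing that the exceptional set $C_f=C_1\cup C_2$ accounts exactly for the points missing from the image. The difficulty is more bookkeeping than conceptual — one must reconcile the ``preferred'' $Q$-expansion convention on the domain side with the analogous two-representation phenomenon present in the $q$-ary target, and verify that every remaining $y\in[0,1]\setminus C_f$ is indeed hit by some admissible Cantor-series argument.
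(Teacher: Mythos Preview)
Your proposal is correct and follows essentially the same approach as the paper: property~(1) from the definition, property~(2) from the digit-restriction observation (the paper simply cites Lemma~\ref{lemma: 1}), property~(3) via the complement identity $1-x=\Delta^Q_{[q_1-1-\varepsilon_1][q_2-1-\varepsilon_2]\ldots}$, and property~(4) by direct computation with the shift. In fact your treatment of~(2) and~(3) is more detailed than the paper's---the paper dispatches~(2) in a single sentence without the bookkeeping you flag, and does not spell out the telescoping sum for $1-x$---so your anticipated ``obstacle'' is one the paper itself leaves to the reader.
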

\begin{proof}
\emph{The first property} follows from the definition of $f$. 

\emph{The second property} follows from Lemma \ref{lemma: 1}.

Let us prove \emph{the third property}. Since
$$
1-x=\sum^{\infty} _{n=1}{\frac{q_n-1-\varepsilon_n}{q_1q_2\cdots q_n}},
$$
we have
$$
f(1-x)=\sum^{\infty} _{n=1}{\frac{q_n-1-\varepsilon_n}{q^n}}.
$$
Whence,
$$
f(x)+f(1-x)=\sum^{\infty} _{n=1}{\frac{\varepsilon_n}{q^n}}+\sum^{\infty} _{n=1}{\frac{q_n-1-\varepsilon_n}{q^n}}=\sum^{\infty} _{n=1}{\frac{q_n-1}{q^n}}=f(1)\le 1.
$$
Note that the last inequality is an equality whenever $y=x$, i.e., when the condition $q_n=q$ holds for all positive integers $n$.

Let us prove \emph{the fourth property}. We have
$$
f\left(\sigma^k(x)\right)=f\left(\sum^{\infty} _{j=k+1}{\frac{\varepsilon_{j}}{q_{k+1}q_{k+2}\cdots q_{j}}}\right)=\sum^{\infty} _{j=k+1}{\frac{\varepsilon_{j}}{q^{j-k}}}=\sigma^k\left(\sum^{\infty} _{n=1}{\frac{\varepsilon_{n}}{q^n}}\right)=\sigma^k\left(f(x)\right).
$$
\end{proof}

\begin{lemma}
The function $f$ is continuous at Q-irrational points from $[0,1]$.

The function $f$ is continuous at all Q-rational points from $[0,1]$ if the condition $q_n=q$ holds for all positive integers $n$.

If there exist positive integers $n$ 
 such that $q_n<q$, then points of the type 
$$
\Delta^Q _{\varepsilon_1\varepsilon_2\ldots\varepsilon_{n-1}\varepsilon_n000\ldots}~\text{and} ~~~\Delta^Q _{\varepsilon_1\varepsilon_2\ldots\varepsilon_{n-1}[\varepsilon_n-1][q_{n+1}-1][q_{n+2}-1]\ldots}
$$
are points of discontinuity of the
function.
\end{lemma}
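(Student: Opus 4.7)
The plan is to treat the three claims in turn, each exploiting the fact that cylinders $\Delta^Q_{c_1\ldots c_N}$ are closed intervals whose endpoints are precisely $Q$-rational points.

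For continuity at a $Q$-irrational point $x_0$, I would observe first that the unique Cantor expansion of a $Q$-irrational is never attained at an endpoint of any cylinder, so $x_0$ lies in the interior of $\Delta^Q_{\varepsilon_1(x_0)\ldots\varepsilon_N(x_0)}$ for every $N$. Given $x_k\to x_0$, for each fixed $N$ the points $x_k$ eventually belong to this cylinder; adhering to the terminating-representation convention from the Remark after Lemma~\ref{lemma: 1}, this forces $\varepsilon_i(x_k)=\varepsilon_i(x_0)$ for $1\le i\le N$. Bounding the tail uniformly then yields
\begin{equation*}
|f(x_k)-f(x_0)|\le\sum_{n>N}\frac{q-1}{q^n}=\frac{1}{q^N},
\end{equation*}
which can be made arbitrarily small, so $f$ is continuous at $x_0$.

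The case $q_n=q$ for every $n$ is immediate: in this situation $f$ reduces to the identity map on $[0,1]$ and is continuous everywhere. To establish the discontinuity at $Q$-rationals when some $q_n<q$, I would fix $x=\Delta^Q_{\varepsilon_1\ldots\varepsilon_{n_0}000\ldots}$ with alternate representation $x_2$ and recall from Lemma~\ref{lemma: 1} that
\begin{equation*}
f(x_2)-f(x_1)=-\frac{1}{q^{n_0}}+\sum_{n>n_0}\frac{q_n-1}{q^n}<0
\end{equation*}
whenever some $q_n<q$ occurs at an index $n>n_0$, which is the nontrivial case targeted by the hypothesis. I would then approach $x$ from the left by the terminating approximations
\begin{equation*}
x^{(m)}=\Delta^Q_{\varepsilon_1\ldots\varepsilon_{n_0-1}[\varepsilon_{n_0}-1][q_{n_0+1}-1]\ldots[q_{n_0+m}-1]000\ldots},
\end{equation*}
each of which is again $Q$-rational in terminating form, so that $x^{(m)}\uparrow x$ but $f(x^{(m)})\to f(x_2)\neq f(x_1)=f(x)$, producing the required jump.

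The main obstacle is purely notational: one must juggle the two Cantor representations of each $Q$-rational consistently and verify that the approximating sequence $x^{(m)}$ is itself evaluated via the terminating-representation convention, rather than being inadvertently replaced by its ``almost-maximal'' counterpart. Once this bookkeeping is pinned down, the remaining estimates are nothing more than the geometric tail computation already carried out in the proof of Lemma~\ref{lemma: 1}.
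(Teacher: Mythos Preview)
Your proof is correct and follows essentially the same route as the paper: for $Q$-irrational points both arguments bound the tail by $q^{-N}$ once the first $N$ digits agree, and for the discontinuity at $Q$-rational points both reduce the jump to the quantity $\frac{1}{q^{n_0}}-\sum_{n>n_0}\frac{q_n-1}{q^n}$ already computed in Lemma~\ref{lemma: 1}. The only cosmetic differences are that the paper states the two one-sided limits directly and bounds the jump from above and below, whereas you realize the left limit via the explicit truncations $x^{(m)}$ and handle the case $q_n\equiv q$ by observing that $f$ is then the identity.
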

\begin{proof}
 Let $x=\Delta^Q _{\varepsilon_1\varepsilon_2...\varepsilon_n...} \in~[0,1]$ be an arbitrary number.

Let $x_0$ be an Q-irrational number.

Then there exists $n_0=n_0(x)$ such that
$$
\left\{
\begin{array}{rcl}
\varepsilon_m (x)&=&\varepsilon_m (x_0) ~~~\text{for}~~~m=\overline{1,n_0-1}\\
\varepsilon_{n_0} (x)& \ne &\varepsilon_{n_0} (x_0).\\
\end{array}
\right.
$$
From the system, it follows that the conditions $x \to x_0$ and $n_0 \to \infty$ are equivalent and 
$$
\left|f (x) - f (x_0)\right|=\left|\sum^{\infty} _{j=n_0} {\frac{\varepsilon_j (f(x))-\varepsilon_j (f(x_0))}{q^k}}\right|\le \sum^{\infty} _{j=n_0} {\frac{|\varepsilon_j (f(x))-\varepsilon_j (f(x_0))|}{q^k}}\le 
$$
$$
\le \sum^{\infty} _{j=n_0} {\frac{q-1}{q^k}}=\frac{1}{q^{n_0-1}}\to 0 ~\mbox{as} ~n_0 \to \infty.
$$

So, the function $f$  is continuous at Q-irrational points. That is,
$$
\lim_{x\to x_0}{f(x)}=f(x_0).
$$

Let $x_0=\Delta^Q _{\varepsilon_1\varepsilon_2...\varepsilon_n...}$ be a Q-rational number. 

If the condition $q_n<q$ holds for a certain $n\in\mathbb N$, then $q_n\le q-1$ and $q_n-1\le q-2$. That is, 
$$
\varepsilon_n\in\Theta_n=\{0,1,\dots , q_n-1\}\subseteq\{0,1, \dots , q-2\}.
$$
Since

$$
\lim_{x \to x_0-0} {f (x)}=\Delta^q _{\varepsilon_1\varepsilon_2...\varepsilon_{n-1}[\varepsilon_n-1][q_{n+1}-1][q_{n+2}-1]...}
$$
and
$$
\lim_{x \to x_0+0} {f}(x)=\Delta^q _{\varepsilon_1\varepsilon_2...\varepsilon_{n-1}\varepsilon_n000...},
$$
we obtain
$$
\Delta_f=\lim_{x \to x_0+0} {f}(x)-\lim_{x \to x_0-0} {f (x)}=\frac{1}{q^n}-\sum^{\infty} _{j=n+1}{\frac{q_j-1}{q^j}}\ge 0. 
$$

Note that
$$
\Delta_f\ge \frac{1}{q^n}-\sum^{\infty} _{j=n+1}{\frac{q-2}{q^j}}=\frac{1}{(q-1)q^n }
$$
and
$$
\Delta_f\le \frac{1}{q^n}-\sum^{\infty} _{j=n+1}{\frac{1}{q^j}}=\frac{q-2}{(q-1)q^n}.
$$
So, $x_0$ is a point of discontinuity for $q_n<q$ and
$$
\frac{1}{(q-1)q^n }\le \Delta_f\le\frac{q-2}{(q-1)q^n}.
$$
\end{proof}

\begin{lemma}
The function $f$ is strictly increasing.
\end{lemma}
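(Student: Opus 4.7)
The plan is to compare $x_1<x_2$ digit by digit in their Cantor series expansions, using the convention of the preceding Remark that rules out the tail $[q_{n+1}-1][q_{n+2}-1]\ldots$ for $Q$-rationals. Write $x_1=\Delta^Q_{\varepsilon_1\varepsilon_2\ldots}$ and $x_2=\Delta^Q_{\varepsilon'_1\varepsilon'_2\ldots}$ in canonical form, and let $n_0$ be the first index at which the two representations differ. A short lexicographic argument—noting that the rank-$n_0$ cylinder containing $x_1$ has length $1/(q_1\cdots q_{n_0})$, which is also the largest absolute value of the tail $\sum_{n>n_0}(\varepsilon'_n-\varepsilon_n)/(q_1\cdots q_n)$—forces $\varepsilon_{n_0}<\varepsilon'_{n_0}$, for otherwise the tail correction could not overcome the deficit at level $n_0$ and we would obtain $x_1\ge x_2$.

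Next, I would compute
$$
f(x_2)-f(x_1)=\frac{\varepsilon'_{n_0}-\varepsilon_{n_0}}{q^{n_0}}+\sum_{n>n_0}\frac{\varepsilon'_n-\varepsilon_n}{q^n}\ge \frac{1}{q^{n_0}}-\sum_{n>n_0}\frac{q_n-1}{q^n},
$$
and bound the tail using $q_n\le q$ by the geometric series $\sum_{n>n_0}(q-1)/q^n=1/q^{n_0}$. This immediately yields $f(x_2)-f(x_1)\ge 0$, which is the weak-monotonicity bound.

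The main obstacle is upgrading this to strict inequality. Equality above requires all three extremal conditions simultaneously: $\varepsilon'_{n_0}=\varepsilon_{n_0}+1$; $q_n=q$ for every $n>n_0$; and $\varepsilon_n=q_n-1=q-1$ and $\varepsilon'_n=0$ for every $n>n_0$. But then $x_1$ is represented in the form $\Delta^Q_{\varepsilon_1\ldots \varepsilon_{n_0-1}\varepsilon_{n_0}[q_{n_0+1}-1][q_{n_0+2}-1]\ldots}$, which is precisely the representation excluded by the Remark, and which denotes the same $Q$-rational as $\Delta^Q_{\varepsilon_1\ldots \varepsilon_{n_0-1}[\varepsilon_{n_0}+1]000\ldots}=x_2$—contradicting $x_1<x_2$. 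Hence $f(x_2)-f(x_1)>0$, and $f$ is strictly increasing.
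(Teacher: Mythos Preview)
Your proof is correct and follows essentially the same approach as the paper: locate the first index $n_0$ where the canonical representations differ, bound $f(x_2)-f(x_1)$ below by $\frac{1}{q^{n_0}}-\sum_{n>n_0}\frac{q_n-1}{q^n}\ge 0$, and use $q_n\le q$. In fact your version is more careful than the paper's, which simply writes a strict inequality at the first step without explanation; your analysis of the equality case and appeal to the excluded-representation convention supplies exactly the missing justification.
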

\begin{proof}
Let us have $x_1=\Delta^Q _{\alpha_1\alpha_2...\alpha_n...}$ and $x_2=\Delta^Q _{\varepsilon_1\varepsilon_2...\varepsilon_n...}$ such that $x_1<x_2$. Then there exists $n_0$ such that $\alpha_i=\varepsilon_i$ for $i=\overline{1,n_0-1}$ and $\alpha_{n_0}<\varepsilon_{n_0}$. So,
$$
f(x_2)-f(x_1)=\frac{\varepsilon_{n_0}-\alpha_{n_0}}{q^{n_0}}+\sum^{\infty} _{j=n_0+1}{\frac{\varepsilon_j-\alpha_j}{q^j}}.
$$
Since $\varepsilon_{n_0}>\alpha_{n_0}$ and $q_n\le q$, we have
$$
f(x_2)-f(x_1)>\frac{1}{q^{n_0}}-\sum^{\infty} _{j=n_0+1}{\frac{q_j-1}{q^j}}\ge \frac{1}{q^{n_0}}+\sum^{\infty} _{j=n_0+1}{\frac{1-q }{q^j}}=0.
$$
\end{proof}

\begin{theorem}[On differential properties]
 \  \ \\
\begin{itemize}
\item  If the condition $q_n=q$ holds for all positive integers $n$, then $f^{'} (x_0)=1$;
\item  If for all $n$ the condition $q_n<q$ holds or there exists only a finite numbers of $n$ such that $q_n=q$, then $f$ is a singular function;
\item If   there exists only a finite numbers of $n$ such that $q_n<q$, then  $f$ is non-differentiable;
\item If there exists an infinite subsequence $(n_k)$ of positive integers such that $q_{n_k}<q$, then  $f$ is a singular function.
\end{itemize}
\end{theorem}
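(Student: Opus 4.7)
The plan is to analyze, at each Q-irrational point $x_0 = \Delta^Q_{\varepsilon_1\varepsilon_2\ldots}$, the slope of $f$ across the rank-$n$ cylinder containing $x_0$. Set
\[
x_n^- := \Delta^Q_{\varepsilon_1\ldots\varepsilon_n 000\ldots}, \qquad x_n^+ := \Delta^Q_{\varepsilon_1\ldots\varepsilon_n [q_{n+1}-1][q_{n+2}-1]\ldots}.
\]
A direct computation yields $x_n^+ - x_n^- = 1/(q_1\cdots q_n)$ and $f(x_n^+) - f(x_n^-) = \sum_{k\ge 1}(q_{n+k}-1)/q^{n+k}$, so the cylinder slope factors as $R_n = r_n s_n$, where $r_n := \prod_{i=1}^n(q_i/q)$ and $s_n := \sum_{k\ge 1}(q_{n+k}-1)/q^k$. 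Note that $r_n$ is non-increasing, $s_n\in(0,1]$, and $s_n=1$ precisely when $q_{n+k}=q$ for every $k\ge 1$.

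The first bullet is immediate: under $q_n\equiv q$ the map $f$ reduces to the identity on $[0,1]$, so $f'(x_0)=1$ everywhere. For the second and fourth bullets the common hypothesis forces infinitely many factors $q_i/q\le(q-1)/q<1$ in the product $r_n$, hence $r_n\to 0$; combined with $s_n\le 1$ this gives $R_n\to 0$. Because $f$ is strictly increasing (preceding lemma), Lebesgue's theorem ensures $f'$ exists almost everywhere, and it suffices to verify $f'(x_0)=0$ at every Q-irrational $x_0$ where the derivative exists (the Q-rational exceptional set being countable). Q-irrationality gives $x_n^-<x_0<x_n^+$ for all $n$, and
\[
R_n = \lambda_n\cdot\frac{f(x_n^+)-f(x_0)}{x_n^+-x_0} + (1-\lambda_n)\cdot\frac{f(x_0)-f(x_n^-)}{x_0-x_n^-}, \quad \lambda_n := \frac{x_n^+-x_0}{x_n^+-x_n^-}\in(0,1),
\]
expresses $R_n$ as a convex combination of two one-sided difference quotients at $x_0$, each converging to $f'(x_0)$. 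Hence $f'(x_0)=\lim R_n=0$ almost everywhere; combined with strict monotonicity and non-constancy, $f$ is singular.

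For the third bullet, let $n_{\max}$ denote the largest index with $q_{n_{\max}}<q$ (the degenerate case $n_{\max}=1$ makes $f$ affine and lies outside the intended scope). Choose $m$ with $1\le m<n_{\max}$ and a Q-rational $x_0=\Delta^Q_{\varepsilon_1\ldots\varepsilon_m 000\ldots}$ with $\varepsilon_m\ne 0$. The jump of $f$ at $x_0$ equals $q^{-m}(1-s_m)$, which is strictly positive because the term $(q-q_{n_{\max}})/q^{n_{\max}-m}$ contributes strictly to $1-s_m$. Hence $f$ is discontinuous, and therefore non-differentiable, at $x_0$.

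The main obstacle is the step $R_n\to f'(x_0)$: the interval $[x_n^-,x_n^+]$ contains $x_0$ as an interior point rather than as an endpoint, so convergence of cylinder slopes to the actual derivative is not automatic. The convex-combination identity above, together with the fact that Q-irrationality forces $\lambda_n\in(0,1)$, is the mechanism that upgrades a shrinking-cylinder slope estimate into a genuine pointwise derivative statement, and is the place where the Q-irrationality assumption is used essentially.
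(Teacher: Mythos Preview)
Your argument is correct and tracks the paper's cylinder-slope computation, but you add two pieces of rigor that the paper leaves implicit or handles loosely.

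For bullets~2 and~4, the paper simply asserts (its equation~(12)) that $f'(x_0)=\lim_n R_n$; you justify this by writing $R_n$ as a convex combination of genuine one-sided difference quotients at the Q-irrational point $x_0$ and then invoking Lebesgue's differentiation theorem for monotone functions. That is precisely the step needed to convert ``cylinder slopes tend to zero'' into ``$f'=0$ almost everywhere,'' and the paper does not supply it.

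For bullet~3, the paper argues by comparing the two limits (11) and (12) and declaring them different. You instead exhibit an explicit jump discontinuity at a Q-rational point of rank $m<n_{\max}$. Your route is both simpler and more robust: with your (correct) cylinder-slope factorisation $R_n=r_n s_n$, one checks that under the bullet-3 hypothesis \emph{both} limits (11) and (12) equal the same positive constant $\prod_{i:\,q_i<q} q_i/q$, so the discrepancy the paper appeals to rests on the spurious extra $q^{-n}$ factor in its displayed formula for~(12). Your discontinuity argument avoids this entirely. Your flag on the degenerate case $n_{\max}=1$, in which $f$ reduces to the linear map $x\mapsto (q_1/q)x$ and is differentiable everywhere, is also correct and not addressed by the paper.
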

\begin{proof}
Suppose $x_0=\Delta^Q _{\varepsilon_1\varepsilon_2...\varepsilon_{m-1}c\varepsilon_{m+1}...}$, where $c$ is a fixed digit from $\{0,1,\dots , q_m-~1\}$, and $(x_m)$ is a sequence of numbers $x_m=\Delta^Q _{\varepsilon_1\varepsilon_2...\varepsilon_{m-1}\varepsilon_m\varepsilon_{m+1}...}$. Then
$$
x_m-x_0=\frac{\varepsilon_m-c}{q_1q_2\cdots q_m}~~~\text{and}~~~f(x_m)-f(x_0)=\frac{\varepsilon_m-c}{q^m}.
$$
Note that the conditions $x_m\to x_0$ and $m\to\infty$ are equivalent. We have
\begin{equation}
\label{eq: 11}
\lim_{m\to\infty}{\frac{f(x_m)-f(x_0)}{x_m-x_0}}=\lim_{m\to\infty}{\frac{q_1q_2\cdots q_m}{q^m}}.
\end{equation}

Let us consider cylinders $\Delta^Q _{c_1c_2...c_n}$. \emph{The change $\mu_{f}$ of the function $f$ on  a cylinder $\Delta^{Q} _{c_1c_2...c_n}$} is called the value   $\mu_{f}\left(\Delta^{Q} _{c_1c_2...c_n}\right)$ defined by the following equality 
\begin{equation*}
\begin{split}
\mu_{f}\left(\Delta^{Q} _{c_1c_2...c_n}\right)&=f\left(\sup \Delta^{Q} _{c_1c_2...c_n}\right)-f\left(\inf \Delta^{Q} _{c_1c_2...c_n}\right)\\ 
&=f\left(\Delta^Q _{\varepsilon_1\varepsilon_2...\varepsilon_n[q_{n+1}-1][q_{n+2}-1]...}\right)-f\left(\Delta^Q _{\varepsilon_1\varepsilon_2...\varepsilon_n000...}\right).
\end{split}
\end{equation*}
So, for $x_0\in\Delta^Q _{c_1c_2...c_n}$, we obtain
\begin{equation}
\label{eq: 12}
f^{'} (x_0)=\lim_{n\to\infty}{\frac{\mu_{f}{\left(\Delta^{Q} _{c_1c_2...c_n}\right)}}{\left|\Delta^Q _{c_1c_2...c_n}\right|}}=\lim_{n\to\infty}{\left(\frac{q_1q_2\cdots q_n }{q^n}\sum^{\infty} _{j=n+1}{\frac{q_j-1}{q^j}}\right)}.
\end{equation}
  Since $2\le q_n\le q$, we have
$$
\frac{1}{q-1}\lim_{n\to\infty}{\left(\frac{q_1q_2\cdots q_n }{q^n}\right)}\le\lim_{n\to\infty}{\left(\frac{q_1q_2\cdots q_n }{q^n}\sum^{\infty} _{j=n+1}{\frac{q_j-1}{q^j}}\right)}\le\lim_{n\to\infty}{\left(\frac{q_1q_2\cdots q_n }{q^n}\right)}.
$$

So,
\begin{itemize}
\item $f^{'} (x_0)=1$ whenever the condition $q_n=q$ holds for all positive integers $n$;
\item $f^{'} (x_0)=0$, i.e, $f$ is a singular function, whenever for all $n$ the condition $q_n<q$ holds or there exists only a finite numbers of $n$ such that $q_n=q$;
\item $f$ is non-differentiable whenever there exists only a finite numbers of $n$ such that $q_n<q$ (since limits \eqref{eq: 11} and \eqref{eq: 12} are different);
\item $f$ is a singular function whenever there exists an infinite subsequence $(n_k)$ of positive integers such that $q_{n_k}<q$.
\end{itemize}
\end{proof}

\begin{theorem}
The Lebesgue integral of the function $f$ can be calculated by the
formula
$$
\int_{[0,1]}{f(x)dx}=\frac{1}{2}\sum^{\infty} _{n=1}{\frac{q_n-1}{q^n}}.
$$
\end{theorem}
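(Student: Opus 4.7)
The plan is to exploit the functional equation
$$f(x)+f(1-x)=f(1)$$
established in part (3) of the properties lemma. Since $f$ is monotone (strictly increasing by the preceding lemma) and bounded on $[0,1]$, it is Borel-measurable and bounded, so it is Lebesgue integrable on $[0,1]$.

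First I would integrate the symmetry identity term by term:
$$\int_{[0,1]} f(x)\,dx+\int_{[0,1]} f(1-x)\,dx=\int_{[0,1]} f(1)\,dx=f(1).$$
Then applying the change of variables $u=1-x$ (which is an affine, measure-preserving bijection of $[0,1]$ onto itself) gives
$$\int_{[0,1]} f(1-x)\,dx=\int_{[0,1]} f(u)\,du,$$
so
$$2\int_{[0,1]} f(x)\,dx=f(1)=\sum_{n=1}^{\infty}\frac{q_n-1}{q^n},$$
which yields the claimed formula after division by $2$.

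The only technical subtlety is that the identity $f(x)+f(1-x)=f(1)$ is derived using the canonical $Q$-representation of $x$ (the one with $\varepsilon_n=0$ infinitely often when a choice exists). At the countably many $Q$-rational points, the two $Q$-representations of $x$ may produce different values of $f$, so the identity could fail for the non-canonical choice. However, the set of $Q$-rational points is at most countable and hence has Lebesgue measure zero, so it does not affect the value of the Lebesgue integral. This is the main — though minor — obstacle; once one notes that a countable exceptional set is $\lambda$-negligible, the computation above goes through without further issue.
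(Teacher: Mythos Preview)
Your proof is correct, and it is \emph{not} the argument the paper gives.

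The paper computes the integral directly: it approximates $f$ from below by the simple functions
\[
s_n(x)\;=\;\sum_{j=1}^{n}\frac{c_j}{q^{\,j}}\quad\text{on the cylinder }\Delta^{Q}_{c_1c_2\ldots c_n},
\]
so that $\displaystyle\int_{[0,1]} s_n=\sum_{(c_1,\ldots,c_n)}\frac{1}{q_1\cdots q_n}\sum_{j=1}^{n}\frac{c_j}{q^{\,j}}$, and evaluates this combinatorial sum (over $c_j\in\Theta_j$) to $\tfrac{1}{2}\sum_{j=1}^{n}(q_j-1)q^{-j}$, then lets $n\to\infty$. Your route instead leverages the already-proved identity $f(x)+f(1-x)=f(1)$, integrates it, and halves. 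The symmetry argument is shorter and cleaner, and your handling of the only delicate point---that the identity may fail at $Q$-rational points because the canonical representation of $x$ forces the non-canonical one for $1-x$---is exactly right: the exceptional set is countable, hence Lebesgue-null, and the change of variables $u=1-x$ is measure-preserving. The paper's direct computation, by contrast, is self-contained (it does not appeal to the earlier lemma) but requires carrying out the cylinder sum explicitly.
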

\begin{proof}
We have
$$
0\le f(x)\le \sum^{\infty} _{n=1}{\frac{q_n-1}{q^n}}.
$$
Suppose that
$$
T=\{0, \Delta^q _{1000...}, \Delta^q _{2000...}, \dots, \Delta^q _{[q_1-1]000...}, \dots , \Delta^q _{[q_1-1][q_2-1]...[q_n-1]...}\},
$$
$$
E_n=\{x: y_{n-1}\le f(x)<y_n\}=\Delta^Q _{c_1c_2...c_n}, ~~~c_n\in \Theta_n.
$$
We get
$$
\lambda(E_n)=\frac{1}{q_1q_2\cdots q_n},
$$
where $\lambda(\cdot )$ is the Lebesgue measure of a set.

Also, $\overline{y}\in[y_{n-1},y_n)$. Suppose that $\overline{y}=y_{n-1}$. It is easy to see that the conditions $\lambda(E_n)\to 0$ and $n\to \infty$ are equivalent. 

So, 
$$
\int_{[0,1]}{f(x)dx}=\lim_{n\to\infty}{\sum_{n}{\frac{\Delta^q _{c_1c_2...c_n000...}}{q_1q_2\cdots q_n}}}=\lim_{n\to\infty}{\left(\frac{(q_n-1)q_1q_2\cdots q_n}{2\cdot q^n\cdot q_1q_2\cdots q_n}\right)}=\frac{1}{2}\sum^{\infty} _{n=1}{\frac{q_n-1}{q^n}}.
$$
Note that 
$$
\frac{1}{2(q-1)}\le \int_{[0,1]}{f(x)dx}\le \frac{1}{2}
$$
and the integral is equal to $\frac{1}{2}$  whenever $f(x)=x$.
\end{proof}

\section{Fractal in $\mathbb R^2$ defined  in terms of a certain map}

Let us consider the following function
$$
g: x=\Delta^q _{\underbrace{u\ldots u}_{\alpha_1-1}\alpha_1\underbrace{u\ldots u}_{\alpha_2-1}\alpha_2\ldots \underbrace{u\ldots u}_{\alpha_n-1}\alpha_n\ldots} \longrightarrow \Delta^q _{\alpha_1\alpha_2...\alpha_n...},
$$
where $u\in \{0,1, \dots , q-1\}$ is a fixed number,  $\alpha_n \in \Theta=\{1,2, \dots , q-1\}\setminus\{u\}$, and $3<q$ is a fixed positive integers. 
This function can be represented by the following way.
$$
g: x=\frac{u}{s-1}+\sum^{\infty} _{n=1}{\frac{\alpha_n-u}{q^{\alpha_1+\alpha_2+\dots +\alpha_n}}}
 \longrightarrow \sum^{\infty} _{n=1}{\frac{\alpha_n}{q^{n}}}=g(x)=y.
$$
\begin{theorem}
The function $g$ has the following properties:
\begin{enumerate}
\item The domain of definition $D(g)$ of the function $g$ is   an uncountable,   perfect,   and nowhere dense set of zero Lebesgue measure, as well as is a self-similar fractal whose Hausdorff dimension $\alpha_0$ satisfies the following equation 
$$
\sum _{p \ne u, p \in \{1,2,\dots , q-1\}} {\left(\frac{1}{s}\right)^{p \alpha_0}}=1.
$$
\item The range of values of $g$ is a self-similar fractal 
$$
E(g)=\{y: y=\Delta^q _{\alpha_1\alpha_2...\alpha_n...}, \alpha_n\in\Theta\}
$$
whose Hausdorff dimension $\alpha_0$ can be calculated by the formula
$$
\alpha_0(E(g))=\log_q {|\Theta|},
$$
where $|\cdot|$ is the number of elements of a set.
\item The function $g$ on the domain of definition is well defined and is a bijective mapping.
\item On the domain of definition the function $g$ is:
\begin{itemize}
\item decreasing whenever $u\in\{0,1\}$ for all $q>3$;
\item increasing  whenever $u\in \{s-2,s-1\}$ for all $q>3$;
\item  not monotonic  whenever $u\in\{2,3, \dots , s-3\}$ and $q>4$.
\end{itemize}
\item The function $g$ is continuous at any point on the domain.
\item The function $g$ is non-differentiable on the domain.
\item The following relationships are true:
$$
g\left(\sigma^{\alpha_1}(x)\right)=\sigma(g(x)),
$$
$$
g\left(\sigma^{\alpha_1+\alpha_2+\dots +\alpha_n}(x)\right)=\sigma^n(g(x)),
$$
where $\sigma$ is the shift operator.
\item The function does not preserve the Hausdorff dimension.
\end{enumerate}
\end{theorem}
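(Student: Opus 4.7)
The plan is to exploit the IFS structure behind both $D(g)$ and $E(g)$: the domain is the attractor of the contractions $\varphi_p(t)=q^{-p}t+c_p$ for $p\in\Theta$, where $c_p$ encodes the initial block $\underbrace{u\ldots u}_{p-1}p$, while the range is the attractor of the uniform IFS $\psi_p(t)=q^{-1}t+pq^{-1}$, $p\in\Theta$. Under $g$ these two attractors are conjugate to the full shift on $\Theta^{\mathbb N}$. For (1) the open set condition is straightforward (distinct $\varphi_p$-images sit in disjoint subcylinders determined by the first $p$ digits), so the Moran equation $\sum_{p\in\Theta}q^{-p\alpha_0}=1$ delivers the Hausdorff dimension, and the perfect/nowhere-dense/Lebesgue-null properties are standard Cantor-attractor theory. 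Part (2) is the Moran calculation for the uniform IFS $\{\psi_p\}$, yielding $\alpha_0=\log_q|\Theta|$. For (3), unique decodability of $(\alpha_n)$ from $x\in D(g)$ is forced by $\alpha_n\ne u$: each maximal run of $u$'s in the $q$-ary expansion of $x$ is terminated by the next admissible $\alpha_n$, and invertibility follows.

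For (5), if $x_m\to x_0$ in $D(g)$ then the leading $q$-ary digits, hence leading blocks of the $\Theta^{\mathbb N}$-codes, agree in ever-growing length, giving $|g(x_m)-g(x_0)|\le q^{-n}$ for large $m$. For (4), I would locate the first block $n_0$ at which two codes $(\alpha_n),(\beta_n)$ diverge, assume $\alpha_{n_0}<\beta_{n_0}$, and observe that at $q$-ary position $\alpha_1+\cdots+\alpha_{n_0}$ the first point carries digit $\alpha_{n_0}$ while the second still carries $u$; the sign of $(\alpha_{n_0}-u)$ therefore pins down the order of $x_1,x_2$, whereas $g(x_1)<g(x_2)$ always. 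For $u\in\{0,1\}$ every admissible digit exceeds $u$: decreasing. For $u\in\{q-2,q-1\}$ the smaller member of any admissible pair is $<u$ (the only element of $\Theta$ possibly exceeding $u$ is the maximum $q-1$, which cannot be the smaller of a pair): increasing. For intermediate $u$, both $1\in\Theta$ (with $1<u$) and a pair lying entirely above $u$ exist, producing opposite orderings and killing monotonicity. For (7), $\sigma^{\alpha_1}$ strips the first block of $x$, leaving the element of $D(g)$ coded by $(\alpha_2,\alpha_3,\ldots)$, whose $g$-image is $\sigma(g(x))$; induction handles the general identity.

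For (6) I would estimate via the cylinder ratio: at level $n$ the cylinder of $D(g)$ containing $x_0$ has diameter of order $q^{-(\alpha_1+\cdots+\alpha_n)}$ while the image cylinder in $E(g)$ has diameter $q^{-n}$, so cylinder-endpoint difference quotients scale as $q^{(\alpha_1+\cdots+\alpha_n)-n}$, which is unbounded since $\alpha_k\ge 1$ with strict inequality attained infinitely often. A second approximating sequence, obtained by perturbing a single block $\alpha_m$ by a unit, yields a bounded non-vanishing ratio; the discrepancy prevents $g'(x_0)$ from existing as a limit taken over $D(g)$. Finally, (8) follows immediately by comparing (1) and (2): the Moran equations $\sum_{p\in\Theta}q^{-p\alpha_0}=1$ and $|\Theta|q^{-\beta_0}=1$ have distinct solutions in general (e.g.\ $q=4$, $u=0$ gives $\alpha_0\approx 0.44$ but $\beta_0=\log_4 3\approx 0.79$), so $g$ cannot preserve Hausdorff dimension. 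The main obstacle I foresee is (6): because $D(g)$ has Lebesgue measure zero, the derivative must be interpreted as a limit over $D(g)$, and the two chosen approximating sequences must both lie in $D(g)$ and provably converge to distinct difference-quotient limits at every $x_0$, uniformly in the choice of $u$.
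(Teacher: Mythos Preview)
Your proposal is essentially correct and follows the same lines as the paper, though you frame things more systematically through the IFS/shift-space picture whereas the paper proceeds by direct digit-level computations and outsources (1)--(2) to earlier references. For (4), (5), (7) your arguments coincide with the paper's almost verbatim. For (8) you actually do better: the paper merely asserts ``it is easy to see that there exists a set $S$'' without exhibiting one, while you compare $\alpha_0(D(g))$ with $\alpha_0(E(g))$ via the two Moran equations.

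On (6) there is one small gap worth patching. Your claim that the cylinder-endpoint ratio $q^{(\alpha_1+\cdots+\alpha_n)-n}$ is unbounded relies on $\alpha_k>1$ infinitely often, which need not hold: when $u\neq 1$ one has $1\in\Theta$, and the single point $x_0$ with every $\alpha_k=1$ gives cylinder ratio identically~$1$. Your two-sequence strategy still rescues this case (the block-perturbation sequence yields a different limit, e.g.\ $-q/(q-1)$ when $u=0$ and one perturbs $\alpha_m=1$ to $2$), so the conclusion stands; just phrase non-differentiability as the \emph{discrepancy} between the two limits rather than as the unboundedness of one of them. The paper's own treatment of (6) uses only the block-perturbation sequence and a rather loose limit computation, so your two-sequence argument is in fact cleaner and closer to a complete proof than what appears there.
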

\begin{proof}
For any fixed $u\in \{0,1,\dots , q-1\}$, the domain of definition $D(g)$ of the function $g$ is   an uncountable,   perfect,  and  nowhere dense set of zero Lebesgue measure, as well as is a self-similar fractal whose Hausdorff dimension $\alpha_0$ satisfies the following equation (see \cite{ {S. Serbenyuk 2017  fractals}, {S. Serbenyuk   fractals}})
$$
\sum _{p \ne u, p \in \{1,2,\dots , q-1\}} {\left(\frac{1}{s}\right)^{p \alpha_0}}=1.
$$
This set does not contain q-rational numbers, i.e., numbers of the form
$$
\Delta^q _{\alpha_1\alpha_2...\alpha_n000...}=\Delta^q _{\alpha_1\alpha_2...\alpha_n[q-1][q-1][q-1]...}.
$$
That is, any element of the domain of definition $D(g)$ of the function $g$ has the unique q-representation. Therefore the condition $g(x_1)\ne g(x_2)$ holds for $x_1\ne~x_2$. Note that a value $g(x)\in E(g)$ is assigned to an arbitrary $x\in D(g)$ and and vice versa. 

Let us consider the difference
$$
|g(x)-g(x_0)|=\left|\sum^{\infty} _{n=1}{\frac{\beta_n-\alpha_n}{q^n}}\right|,
$$
where
$x_0=\Delta^q _{\underbrace{u\ldots u}_{\alpha_1-1}\alpha_1\underbrace{u\ldots u}_{\alpha_2-1}\alpha_2\ldots \underbrace{u\ldots u}_{\alpha_n-1}\alpha_n\ldots}$ is a fixed number from $D(g)$ and $x=\Delta^q _{\underbrace{u\ldots u}_{\beta_1-1}\beta_1\underbrace{u\ldots u}_{\beta_2-1}\beta_2\ldots \underbrace{u\ldots u}_{\beta_n-1}\beta_n\ldots}$.
It is easy to see that  the conditions $x\to x_0$ and $\beta_n\to\alpha_n$ are equivalent, $n=1,2, \dots $. So,
$$
\lim_{x\to x_0}{|g(x)-g(x_0)|}=\lim_{\beta_n\to\alpha_n}{\left|\sum^{\infty} _{n=1}{\frac{\beta_n-\alpha_n}{q^n}}\right|}=0.
$$
From the definition of $g$ it follows that the set 
$$
E(g)=\{y: y=\Delta^q _{\alpha_1\alpha_2...\alpha_n...}, \alpha_n\in\Theta\}
$$
is the range of values of $g$. It follows from Theorem~2  in \cite{Serbenyuk 2018 sets} that $E(g)$ is a self-similar fractal whose Hausdorff dimension $\alpha_0$ can be calculated by the formula
$$
\alpha_0(E(g))=\log_q {|\Theta|},
$$
where $|\cdot|$ is the number of elements of a set.

So, \emph{Properties~1--3 and~5} are proved.

Let us prove \emph{Property~4}. Let us have $x_1=\Delta^q _{\underbrace{u\ldots u}_{\alpha_1-1}\alpha_1\underbrace{u\ldots u}_{\alpha_2-1}\alpha_2\ldots \underbrace{u\ldots u}_{\alpha_n-1}\alpha_n\ldots}$  and $x_2=\Delta^q _{\underbrace{u\ldots u}_{\beta_1-1}\beta_1\underbrace{u\ldots u}_{\beta_2-1}\beta_2\ldots \underbrace{u\ldots u}_{\beta_n-1}\beta_n\ldots}$ such that $x_1\ne x_2$. Then there exists $n_0$ such that $\alpha_i=\beta_i$ for $i=\overline{1,n_0-1}$ and $\alpha_{n_0}\ne\beta_{n_0}$. Suppose that $\alpha_{n_0}<\beta_{n_0}$. That is, consider the following numbers
$$
x_1=\Delta^q _{\underbrace{u\ldots u}_{\alpha_1-1}\alpha_1\underbrace{u\ldots u}_{\alpha_2-1}\alpha_2\ldots\underbrace{u\ldots u}_{\alpha_{n_{0}-1}-1}\alpha_{{n_0}-1}  \underbrace{u\ldots u}_{\alpha_{n_0}-1}\alpha_{n_0}\ldots}
$$
and
$$
 x_2=\Delta^q _{\underbrace{u\ldots u}_{\beta_1-1}\beta_1\underbrace{u\ldots u}_{\beta_2-1}\beta_2\ldots\underbrace{u\ldots u}_{\beta_{n_{0}-1}-1}\beta_{{n_0}-1} \underbrace{u\ldots u}_{\beta_{n_0}-1}\beta_{n_0}\ldots}
$$
when $\alpha_{n_0}<\beta_{n_0}$. Note that  sufficiently consider the numbers 
$$
\Delta^q _{\underbrace{u\ldots u}_{\alpha_{n_0}-1}\alpha_{n_0}\ldots}~~~\text{and}~~~\Delta^q _{\underbrace{u\ldots u}_{\beta_{n_0}-1}\beta_{n_0}\ldots}.
$$
Then we obtain the following cases:
\begin{itemize}
\item $g(x_1)<g(x_2)$  for $x_1>x_2$. The last condition is true for the case when $u=0$ or $u=1$. That is, in this case, $g$ is decreasing. 
\item $g(x_1)<g(x_2)$  for $x_1<x_2$. The last condition is true for the case when $u=q-1$ or $u=q-2$.
\item If $g(x_1)<g(x_2)$  for $x_1>x_2$ and $x_1<x_2$. This condition is true for the case when $u\in\{2,3, \dots , q-3\}$ and $q>4$. That is, $g$ is not monotonic.
\end{itemize}

Note that, for $q=4$, $g$ is increasing when $u\in\{2,3\}$ and is decreasing when $u\in\{0,1\}$.

Let us prove \emph{the 6th property}.  Let us consider a sequence $(x_n)$ of numbers $x_n=\Delta^q _{\underbrace{u\ldots u}_{\alpha_1-1}\alpha_1\underbrace{u\ldots u}_{\alpha_2-1}\alpha_2\ldots\underbrace{u\ldots u}_{\alpha_{n-1}-1}\alpha_{{n} -1}  \underbrace{u\ldots u}_{\alpha_{n} -1}\alpha_{n}\underbrace{u\ldots u}_{\alpha_{n+1}-1}\alpha_{n+1}\ldots}$ and a fixed number  $x_0=\Delta^q _{\underbrace{u\ldots u}_{\alpha_1-1}\alpha_1\underbrace{u\ldots u}_{\alpha_2-1}\alpha_2\ldots\underbrace{u\ldots u}_{\alpha_{n_{0}-1}-1}\alpha_{{n}-1}  \underbrace{u\ldots u}_{c-1}c\underbrace{u\ldots u}_{\alpha_{n+1}-1}\alpha_{n+1}\ldots}$, where $c$ is a fixed number. Then
$$
\lim_{x\to x_0}{\frac{g(x)-g(x_0)}{x-x_0}}=\lim_{x\to x_0}{\frac{\frac{\alpha_n-c}{q^n}}{\frac{\alpha_n}{q^{\alpha_1+\alpha_2+\dots +\alpha_{n-1}+\alpha_n}}-\frac{c}{q^{\alpha_1+\alpha_2+\dots +\alpha_{n-1}+c}}}}.
$$
$$
=\lim_{\alpha_n\to c}{\frac{(\alpha_n-c)q^{\alpha_1+\alpha_2+\dots +\alpha_{n-1}+\alpha_n+c}}{q^n (\alpha_nq^c-cq^{\alpha_n})}}=\lim_{\alpha_n\to c}{\frac{q^{\alpha_1+\alpha_2+\dots +\alpha_{n-1}+\alpha_n+c}}{q^{c+n}}}.
$$
So, the function is non-differentiable.

\emph{Property 7}. It is easy to see that 
$$
g\left(\sigma^{\alpha_1}(x)\right)=g(\Delta^q _{\underbrace{u\ldots u}_{\alpha_2-1}\alpha_2\ldots\underbrace{u\ldots u}_{\alpha_{n-1}-1}\alpha_{{n} -1}  \underbrace{u\ldots u}_{\alpha_{n} -1}\alpha_{n}\underbrace{u\ldots u}_{\alpha_{n+1}-1}\alpha_{n+1}\ldots})=\Delta^q _{\alpha_2\alpha_3...\alpha_n...}=\sigma(g(x)),
$$
$$
g\left(\sigma^{\alpha_1+\alpha_2+\dots +\alpha_n}(x)\right)=g(\Delta^q _{\underbrace{u\ldots u}_{\alpha_{n+1}-1}\alpha_{n+1}\ldots\underbrace{u\ldots u}_{\alpha_{n+2}-1}\alpha_{n+2}   \ldots})=\Delta^q _{\alpha_{n+1}\alpha_{n+3}...}=\sigma^n(g(x)).
$$

\emph{Property 8}. It is easy to see that there exists a set $S$ such that $\alpha_0(S)\ne \alpha_0({g(S)})$, where $\alpha_0(\cdot)$ is the Hausdorff dimension of a set.
\end{proof}

\begin{theorem}
The Hausdorff dimension of a graph of the function  $g$ is equal to $1$.
\end{theorem}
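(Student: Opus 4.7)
The plan is a two-sided bound for $\dim_H \Gamma_g$, where $\Gamma_g = \{(x, g(x)) : x \in D(g)\} \subset [0,1]^2$, based on the natural self-similar decomposition of $D(g)$ by its Moran cylinders. For each tuple $(\alpha_1,\ldots,\alpha_n) \in \Theta^n$, let $I_{\alpha_1\cdots\alpha_n}$ denote the $q$-ary cylinder in the domain obtained by fixing the first $\alpha_1+\cdots+\alpha_n$ $q$-ary digits of $x\in D(g)$, so that $|I_{\alpha_1\cdots\alpha_n}| = q^{-(\alpha_1+\cdots+\alpha_n)}$, and let $J_{\alpha_1\cdots\alpha_n}$ denote the $q$-ary image cylinder $[\Delta^q_{\alpha_1\cdots\alpha_n 00\cdots},\Delta^q_{\alpha_1\cdots\alpha_n(q-1)(q-1)\cdots}]$ of length $q^{-n}$. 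By construction of $g$, the piece of $\Gamma_g$ above $I_{\alpha_1\cdots\alpha_n}$ is contained in the rectangle $R_{\alpha_1\cdots\alpha_n} = I_{\alpha_1\cdots\alpha_n}\times J_{\alpha_1\cdots\alpha_n}$; because $\alpha_i\geq 1$ the horizontal side does not exceed the vertical side $q^{-n}$, so $\mathrm{diam}\, R_{\alpha_1\cdots\alpha_n} \leq \sqrt 2\, q^{-n}$.

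For the upper bound $\dim_H \Gamma_g \leq 1$, I sum the diameters of the $|\Theta|^n$ rectangles at level $n$: $\mathcal H^1_{\sqrt 2\, q^{-n}}(\Gamma_g) \leq \sqrt 2\,(|\Theta|/q)^n$, which tends to $0$ because $|\Theta| < q$. Hence $\mathcal H^1(\Gamma_g) = 0$ and $\dim_H \Gamma_g \leq 1$.

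For the lower bound $\dim_H \Gamma_g \geq 1$, I would attempt a mass-distribution argument: push forward a suitably weighted Bernoulli measure on $\Theta^{\mathbb N}$ via $(\alpha_j)\mapsto(x,g(x))$ to produce a Borel probability measure $\mu$ on $\Gamma_g$, and prove a Frostman estimate $\mu(B(p,r))\leq Cr$ uniformly over Euclidean disks $B(p,r) \subset \mathbb R^2$. The weights would be tuned so that the $\mu$-mass of $R_{\alpha_1\cdots\alpha_n}$ matches its longer side $q^{-n}$.

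\emph{Main obstacle.} This lower bound is where I expect the real work. Both coordinate projections $\Gamma_g\to D(g)$ and $\Gamma_g\to E(g)$ are $1$-Lipschitz with images of Hausdorff dimension strictly below $1$ (namely $\alpha_0$ from the Moran equation and $\log_q|\Theta|$), so naive projection only yields $\dim_H \Gamma_g \geq \max(\alpha_0,\log_q|\Theta|)$, which falls short of $1$. Moreover, the very same cylinder cover above, if used with exponent $s = \log_q|\Theta|$, pushes the upper bound down to $\dim_H \Gamma_g \leq \log_q|\Theta| < 1$; so to reach equality with $1$ the argument must either bypass the symbolic cover entirely or reinterpret $\Gamma_g$ (for instance as the closure in $[0,1]^2$, or as the graph of a continuous extension of $g$ to $[0,1]$). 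Identifying the right interpretation, and then carrying out the Frostman computation with the two mismatched scales $q^{-(\alpha_1+\cdots+\alpha_n)}$ and $q^{-n}$, is where I expect the genuine difficulty.
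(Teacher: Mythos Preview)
Your obstacle is real, and the paper does not resolve it. The paper's argument runs through the ``fractal cell entropy dimension'' $\alpha^K$ (a box-counting variant satisfying $\alpha^K\ge\alpha_0$): it observes that $\Gamma_g$ is covered by the $\tau^m=|\Theta|^m$ rectangles $R_{\alpha_1\cdots\alpha_m}$ you describe, each fitting inside a single rank-$m$ square of side $q^{-m}$, and then verifies only that the resulting quantity $\tau^m(\sqrt{2}\,q^{-m})^\alpha$ tends to $0$ for $\alpha>1$. The matching lower bound is asserted with the phrase ``the graph of the function has self-similar properties'' and no further computation. So the paper uses the very cover you wrote down, but stops at the exponent $1$ and never confronts the difficulty you raised.

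Your sharper observation is correct and in fact refutes the statement. With $s=\log_q|\Theta|$ the same cover gives
\[
\sum_{(\alpha_1,\ldots,\alpha_n)\in\Theta^n}(\operatorname{diam} R_{\alpha_1\cdots\alpha_n})^{s}
\le |\Theta|^n\,2^{s/2}\,q^{-ns}=2^{s/2},
\]
so $\dim_H\Gamma_g\le\log_q|\Theta|<1$. This is sharp: because every $\alpha_i\ge1$, two points of $E(g)$ whose $q$-ary expansions first differ at position $n$ have preimages in $D(g)$ whose $q$-ary expansions agree at least through position $n$, so $g^{-1}$ is Lipschitz and $y\mapsto(g^{-1}(y),y)$ is a bi-Lipschitz bijection from $E(g)$ onto $\Gamma_g$. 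Hence $\dim_H\Gamma_g=\dim_H E(g)=\log_q|\Theta|$, not $1$. No mass-distribution argument can produce the lower bound $1$ you were aiming for; the theorem as stated is incorrect, and your ``main obstacle'' paragraph already contains the reason.
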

\begin{proof}

Suppose that 
$$
X=[0,1]\times[0,1]=\left\{(x,y): x=\sum^{\infty} _{m=1} {\frac{\alpha_m}{q^{m}}}, \alpha_{m} \in \Theta_q=\{0,1,\dots ,q-1\},
y=\sum^{\infty} _{m=1} {\frac{\beta_m}{q^{m}}}, \beta_{m} \in \Theta_q \right\}.
$$
Then the set 
$$
\sqcap_{(\alpha_{1}\beta_{1})(\alpha_{2}\beta_{2})...(\alpha_{m}\beta_{m})}=\Delta^{q} _{\alpha_{1}\alpha_{2}...\alpha_{m}}\times\Delta^{q} _{\beta_{1}\beta_{2}...\beta_{m}}
$$
is a square with a side length of $q^{-m}$. This square is called \emph{a square of rank $m$ with the base $(\alpha_{1}\beta_{1})(\alpha_{2}\beta_{2})\ldots (\alpha_{m}\beta_{m})$}.

If  $E\subset X$, then the number
$$
\alpha^{K}(E)=\inf\{\alpha: \widehat{H}_{\alpha} (E)=0\}=\sup\{\alpha: \widehat{H}_{\alpha} (E)=\infty\},
$$
where
$$
\widehat{H}_{\alpha} (E)=\lim_{\varepsilon \to 0} \left[{\inf_{d\leq \varepsilon} {K(E,d)d^{\alpha}}}\right]
$$
and $K(E,d)$ is the minimum number of squares of diameter $d$ required to cover the set $E$, is called \emph{ the fractal cell entropy dimension of the set E.} It is easy to see that $\alpha^{K}(E)\ge \alpha_0(E)$.

From the definition and properties of the function  $g$ it follows that the graph of the function belongs to $\tau=|\Theta|$ squares from  $q^2$
first-rank squares (here $\tau$ is equal to $(s-1)$ for $u=0$ and $\tau$ is equal to $(s-2)$ for $u\ne 0$):
$$
\sqcap_{(i_1i_1)}=\left[\Delta^q _{\underbrace{u\ldots u}_{i_1-1}i_{1}}, \Delta^q _{i_1}
\right],~i_1 \in \Theta_q.
$$

The graph of  the function $f$ belongs to $\tau^2$ squares from $q^4$ second-rank squares:
$$
\sqcap_{(i_1i_2)(i_1i_2)}=\left[\Delta^q _{\underbrace{u\ldots u}_{i_1-1}i_{1}\underbrace{u\ldots u}_{i_2-1}i_{2}}, \Delta^q _{i_1i_2}
\right],~i_1, i_2 \in \Theta_q.
$$

The graph $\Gamma_{g}$ of the function  $g$ belongs to $\tau^m$ squares of rank $m$ with sides $q^{\alpha_1+\alpha_2+\dots+\alpha_m}$ and $q^{-m}$. Then
$$
\widehat{H}_{\alpha} (\Gamma_g)=\lim_{\overline{m \to \infty}} {\tau^m \left(\sqrt{q^{-2(\alpha_1+\alpha_2+\dots+\alpha_m)}+q^{-2m}}\right)^{\alpha}}.
$$
Since $q^{-m(q-1)} \le q^{-(\alpha_1+\alpha_2+\dots+\alpha_m)}\le q^{-m}$, we get 
$$
\widehat{H}_{\alpha} (\Gamma_g)=\lim_{\overline{m \to \infty}} {\tau^m \left(2\cdot q^{-2m}\right)^{\frac{\alpha}{2}}}=\lim_{\overline{m \to \infty}} {\tau^m \left(2\cdot q^{-2m}\right)^{\frac{\alpha}{2}}}=\lim_{\overline{m \to \infty}}
{\left(2^{\frac{\alpha}{2}}\cdot \tau^m \cdot q^{-m\alpha}\right)}
$$
$$
=\lim_{\overline{m \to \infty}}{\left(2^{\frac{\alpha}{2}}\cdot \left(\frac{\tau}{q^{\alpha}}\right)^m\right)}
$$
for $\alpha_1+\alpha_2+\dots+\alpha_m=m$
and 
$$
\widehat{H}_{\alpha} (\Gamma_g)=\lim_{\overline{m \to \infty}} {\tau^m \left(q^{-2m(q-1)}+q^{-2m}\right)^{\frac{\alpha}{2}}}=\lim_{\overline{m \to \infty}}{\left(\left(\frac{\tau^{\frac{1}{\alpha}}}{q}\right)^{2m}+\left(q^{1-q}\tau^{\frac{1}{\alpha}}\right)^{2m}\right)^{\frac{\alpha}{2}}}
$$
for $\alpha_1+\alpha_2+\dots+\alpha_m=m(q-1)$.

It is obvious that if $\left(\frac{\tau}{q^{\alpha}}\right)^m\to 0$,  $\left(\frac{\tau^{\frac{1}{\alpha}}}{q}\right)^{2m}\to 0$,and $\left(q^{1-q}\tau^{\frac{1}{\alpha}}\right)^{2m}\to 0$
 for $\alpha >1$, and the graph of the function has self-similar properties, then  $\alpha^K (\Gamma_g)=\alpha_0(\Gamma_g)=~1$. 
\end{proof}

\end{document}